\NeedsTeXFormat{LaTeX2e}
\documentclass[oneside,draft,A4,11pt]{amsart}
\usepackage{enumitem}
\usepackage{graphicx}
\usepackage[margin=1.4in]{geometry} 
\renewcommand{\sc}{\scshape}
\usepackage{amssymb}
\usepackage{amsthm}

\usepackage{amsmath}
\usepackage{color}
\usepackage[utf8]{inputenc}
\newcommand*\Laplace{\mathop{}\!\mathbin\bigtriangleup}

\usepackage{tikz}
\usepackage[all]{xy}
\usepackage[bookmarksnumbered,colorlinks]{hyperref}
\usepackage{dsfont}
\usepackage[normalem]{ulem}
\usepackage{float} 
\def\br#1\er{\textcolor{red}{#1}} 
\hyphenation{Lo-ren-tzian}

\newcommand{\R}{\mathds{R}}

\usepackage{autonum}
\begin{document}
\title{A Moser-Bernstein problem for Riemannian warped products}
\author[A.L. Albujer]{Alma L. Albujer} \address{Departamento de
  Matemáticas, Edificio Albert Einstein\hfill\break\indent Universidad
  de Córdoba, Campus de Rabanales,\hfill\break\indent 14071 Córdoba,
  Spain}

\author[J. Herrera]{J\'onatan Herrera} \address{Departamento de
  Matemáticas, Edificio Albert Einstein\hfill\break\indent Universidad
  de Córdoba, Campus de Rabanales,\hfill\break\indent 14071 Córdoba,
  Spain}

\author[R. Rubio]{Rafael M. Rubio} \address{Departamento de
  Matemáticas, Edificio Albert Einstein\hfill\break\indent Universidad
  de Córdoba, Campus de Rabanales,\hfill\break\indent 14071 Córdoba,
  Spain}

\newtheorem{thm}{Theorem}[section]
\newtheorem{theorem}{Theorem}[section]
\newtheorem{proposition}[thm]{Proposition} \newtheorem{lemma}[thm]{Lemma}
\newtheorem{corollary}[thm]{Corollary} \newtheorem{conv}[thm]{Convention}
\theoremstyle{definition} \newtheorem{defi}[thm]{Definition}
\newtheorem{notation}[thm]{Notation} \newtheorem{exe}[thm]{Example}
\newtheorem{conjecture}[thm]{Conjecture} \newtheorem{prob}[thm]{Problem}
\newtheorem{remark}[thm]{Remark}
\newtheorem{example}[thm]{Example}
\newcommand{\nablat}{\overline{\nabla}}
\renewcommand{\div}{\mathrm{div}}
\begin{abstract}In this work we deal with an elliptic non-linear problem, which arises naturally from Riemannian geometry. This problem has clasically been studied in the the Euclidean $n$-dimensional space and it is known as the Moser-Bernstein problem. Nevertheless we solve this type of problems in a wide family of Riemannian manifolds, constructed as Riemannian warped products. More precicely, we study the entire solutions to the minimal hypersurface equation in a Riemannian warped product $M=P\times_h\mathds{R}$, where $P$ is a complete Riemannian parabolic manifold and $h$ a positive smooth function on $P$.
\end{abstract}

\keywords{Elliptic non-linear equation, Moser-Bernstein problem, minimal hypersurface}

\subjclass[2010]{35J93, 58J05, 53C42, 53A10}

\maketitle
\usetikzlibrary{matrix}

\section{Introduction}
\label{sec:introduction}
Let $(P^n,\sigma)$ be a Riemannian $n$-dimensional manifold and $h\in\mathcal{C}^\infty(P^n)$ a given function. The aim of this paper is to study  uniqueness results for entire solutions to the partial differential equation
\begin{equation}\label{eq:PDE}
\div \left( \dfrac{h \nabla u}{\sqrt{1+h^2 \left| \nabla u \right|_{\sigma}^2 }} \right)+ \dfrac{\sigma(\nabla h, \nabla u)}{\sqrt{1+h^2 \left| \nabla u \right|_{\sigma}^2 }}=0,
\end{equation}
whose $\left| \nabla u\right|_{\sigma}$ is bounded and
where $\div$ and $\nabla$ stand for the divergence and gradient operator of $(P^n,\sigma)$, and $\left|\cdot\right|_{\sigma}$ denotes its related norm.

The above equation is a non-linear elliptic equation of divergence form (see \cite{Gilbarg_1983}) which has a remarkable geometrical meaning.
In the particular case where $P^n=\mathds{R}^n$ is the Euclidean space and $h\equiv 1$, equation~\eqref{eq:PDE} becomes the minimal hypersurface equation in the Euclidean space, 

\begin{equation}\label{eq:PDE_minimal}
  \div \left( \dfrac{\nabla u}{\sqrt{1+\left| \nabla u \right|_{\sigma}^2 }} \right)=0.
\end{equation}

The graph of any solution to~\eqref{eq:PDE_minimal} is a hypersurface in $\mathds{R}^n$ with zero mean curvature.  Those hypersurfaces are called minimal, and it is a classical fact that they are critical points of the volume functional $\int\sqrt{1+|\nabla u|_{\sigma}^2}\,dV$ under normal variations, where $dV$ denotes the canonical volume form on the Euclidean space.

Since in 1914  Bernstein~\cite{Bernstein_1914} proved that the only entire solutions to the minimal surface equation~\eqref{eq:PDE_minimal} in $\mathds{R}^3$ are the affine functions, the study of existence and uniqueness of minimal hypersurfaces in different ambient spaces and/or general dimension has become a topic of wide interest in the theory of Riemannian submanifolds. 

The possible extension of the Bernstein theorem to higher dimension is known in the literature as the Bernstein conjecture. Giving a partial answer to this conjecture, Moser~\cite{Moser_1961} proved in 1961 that the only entire solutions $u$ to the minimal hypersurface equation in $\mathds{R}^{n+1}$ such that $|\nabla u|_{\sigma}\leq C$ for some constant $C > 0$, are the affine functions. This result is usually known as the Moser-Bernstein theorem.

Some years later Simons~\cite{Simons_1968}, jointly with some previous results by Almgren~\cite{Almgren_1966} and De Giorgi~\cite{deGiorgi_1965}, extended the classical Bernstein theorem up to dimension $n\leq 7$. Futhermore, Bombieri, de Giorgi and Giusti~\cite{Bombieri_1968} presented in 1968 counterexamples to this result for each $n\geq 8$. Consequently, in order to obtain some uniqueness results for minimal hypersurfaces in general dimension some extra assumptions are needed, as it is the case of the Moser-Bernstein theorem.

As a first generalization of the ambient space, given an $n$-dimensional Riemannian manifold $(P^n,\sigma)$ we can consider the product manifold $P^n\times\mathds{R}$ endowed with the metric
\begin{equation}\label{eq:metric_prod}
g=\pi_{P}^\ast(\sigma)+\pi_{\mathds{R}}^\ast (dr^2),
\end{equation}
where $\pi_P$ and $\pi_\mathds{R}$ stand for the natural projections of $P\times \mathds{R}$ onto $P$ and $\mathds{R}$ respectively. In this situation, a function $u\in\mathcal{C}^\infty(P)$ determines a minimal hypersurface in $P^n\times\mathds{R}$ if and only if $u$ satisfies the same partial differential equation as in~\eqref{eq:PDE_minimal}. 

In this context of Riemannian product spaces, and in the particular case where $n=2$ and the fiber $P$ is a complete Riemannian surface with non negative Gaussian curvature, Rosenberg~\cite{Rosenberg_2002} showed in 2002 that any entire minimal graph in $P^2\times\mathds{R}$ must be totally geodesic. A few years later, in 2007 Al\'ias, Dajzcer and Ripoll~\cite{Al_as_2006} improved this result by showing that in the case where $P$ is a complete Riemannian surface with non-identically zero non negative Gaussian curvature the only entire minimal graphs in $P^2\times\mathds{R}$ are the slices $P^2\times\{r_0\}$, $r_0\in\mathds{R}$. This assumption on the Gaussian curvature of the fiber is necessary, as it is shown by the fact that there exist plenty of examples of entire minimal graphs in $\mathds{H}^2\times\mathds{R}$, $\mathds{H}^2$ being the hyperbolic plane, see for instance Example \ref{example}.

In general dimension, Rosenberg, Schulze and Spruck~\cite{Rosenberg_2013} proved that given $P^n$  a complete Riemannian manifold with non negative Ricci curvature and sectional curvature bounded from below, any entire minimal graph in $P^n\times\mathds{R}$ with non negative height function must be a slice. On the other hand, Oliveira and de Lima~\cite{Oliveira_2015} have obtained a new uniqueness result for constant mean curvature hypersurfaces in this context. Specifically, under the same assumptions on $P^n$ as in~\cite{Rosenberg_2013}, they have proven that any constant mean curvature entire graph in $P^n\times\mathds{R}$ with bounded second fundamental form and bounded gradient is necessarily minimal. If, in addition, the graph is bounded from below it must be a slice. However, due to their assumption on the second fundamental form, the result cannot be formulated in terms of any elliptic differential equation.

The assumption of boundedness of the height function of the graph is closely related to the well-known half-space property. A manifold $P$ is said to have the half-space property if any minimal hypersurface properly immersed in $P\times\mathds{R}_{+}$ is a slice. In this sense Hoffman and Meeks~\cite{Hoffman_Meeks_1990} proved in 1990 the well-known half-space theorem, which states that any proper immersed minimal surface in $\mathds{R}^3=\mathds{R}^2\times\mathds{R}_{+}$ is necessarily a slice.

Recently, Romero, Rubio and Salamanca~\cite{Romero_Rubio_Salamanca} have considered another generalization of the ambient spaces, considering a Riemannian warped product space $P_{\,\,h}\!\times I$ with base an open interval $(\mathds{R},dr^2)$, fiber a Riemannian manifold $(P,\sigma)$ and warping function a smooth function $h\in\mathcal{C}^\infty(I)$. That is, $P_{\,\,h}\!\times I$ denotes the product manifold $P\times I$ endowed with the Riemannian metric
\begin{equation}\label{eq:metric_grw}
g=h(\pi_I)^2\pi_P^\ast(\sigma)+\pi_I^\ast(dr^2).
\end{equation}

The graph determined by a smooth function $u\in\mathcal{C}^\infty(P)$ such that $u(P)\subseteq I$ is a minimal hypersurface in  $P_{\,\,h}\!\!\times I$ if and only if 
\begin{equation}\label{eq:PDE_grw}
\div \left( \dfrac{\nabla u}{h(u)\sqrt{h(u)^2+\left| \nabla u \right|_{\sigma}^2 }} \right)=\frac{h'(u)}{\sqrt{h(u)^2+\left|\nabla u\right|_{\sigma}^2}}
\left\{n-\frac{\left|\nabla u\right|_{\sigma}^2}{h(u)^2}\right\}.
\end{equation}
In order to obtain Moser-Bernstein type results for entire graphs in $P_{\,\,h}\!\times I$ the authors ask in~\cite{Romero_Rubio_Salamanca} the fiber to be parabolic, and need some extra assumptions on the warping function. Specifically they obtain some interesting results in the case where $h$ is not constant on any non-empty open subset of $I$, $\ln h$ is convex and the fiber $P$ is parabolic.

Along this paper we will deal with a different family of Riemannian warped product spaces. Given $(P,\sigma)$ a Riemannian manifold and $h\in\mathcal{C}^\infty(P)$, let us consider the product manifold $(M,g)$ given by 
\begin{equation}\label{eq:metric}
M=P\times \R,\qquad g=\pi^\ast_P(\sigma)+h(\pi_P)^2\pi_I^\ast(dr^2).
\end{equation}
This warped product is usually denoted by $P\times_h \mathds{R}$. From~\cite[Subsection 2.2]{Dajczer_Hinojosa_deLira_2008} it is immediate to observe that a smooth function $u\in\mathcal{C}^\infty(P)$ determines an entire minimal graph in $P\times_h \mathds{R}$ if and only if $u$ satisfies equation~\eqref{eq:PDE}. Thus we will refer to~\eqref{eq:PDE} as the minimal hypersurface equation in the warped product  $P\times_h \mathds{R}$.

Several authors have studied certain uniqueness results in this new context. Specifically, Romero and Rubio \cite{Romero_Rubio_2016} have recently obtained uniqueness results for the solutions to the minimal hypersurface equation in the case where $P$ is a compact manifold. Furthermore, Dajzcer and de Lira~\cite{DajczerdeLira} showed that under certain curvature conditions of the ambient space and a second order regularity on the warping function, bounded graphs with constant mean curvature in $P\times_h \mathds{R}$ must be totally geodesic slices.

Our main goal is to obtain a new Moser-Bernstein result for the solutions to~\eqref{eq:PDE} in the case where $(P,\sigma)$ is a parabolic Riemannian manifold, Theorem~\ref{thm:MB}. It is interesting to observe that in order to get this result, we only need to ask for an appropriate boundedness condition on the warping function. In fact Theorem~\ref{thm:MB} is obtained as a consequence of a more general geometric result, Theorem~\ref{thm:2}, which provides sufficient conditions to guarantee the uniqueness of entire half-bounded graphs in $P\times_h\mathds{R}$ with bounded gradient and signed (not necessarily constant) mean curvature function. In order to prove Theorem~\ref{thm:2} we make use of two main ideas. On the one hand, the fact that the height function of a graph in $P\times_h\mathds{R}$ with non positive mean curvature function is superharmonic for a convenient conformal change of the metric. On the other hand, we have into account that parabolicity, which is not in general invariant under conformal transformations, is preserved under the so-called quasi-isometries. In particular, we will be able to ensure the parabolicity of the graph under some mild conditions on both, the warping function $h$ and the gradient of the graph.

\section{Preliminaries}
\label{sec:preliminaries}

Let $(M, g)$ be an $(n+1)$-dimensional Riemannian manifold endowed with a nowhere zero Killing vector field $K$. Suppose  that the orthogonal distribution $K^{\perp}$ is integrable or, equivalently, that $\omega\wedge d\omega=0$ where $\omega$ is the $1$-form  metrically equivalent to $K$. Given an integral leaf $P$ of $K^{\perp}$, consider a bounded domain $\Omega\subset P$ with regular boundary $\Gamma =\partial\Omega$. Let $\phi: I\times \overline\Omega \to M$ be the flow generated by $K$ with initial values on $\overline\Omega$, where $I$ is the maximal interval of definition. Since $K$ is a Killing vector field $\phi$ is a local isometry, thus the Riemannian manifold $M$ is locally isometric to $\Omega\times_{h} I$ where $h = \| K\|$ and $\|\cdot\|$ denotes the norm on $(M,g)$.

If in addition the Riemannian manifold is $1$-connected and the Killing vector field is complete, it is not difficult to see that the vector field $K$ is parallel relative to the metric $\bar{g}=\frac{1}{g(K,K)}g$. As a consequence the $1$-form metrically equivalent to $K$ with this new metric, $\omega=\bar{g}(K,\cdot)$, must be closed. Therefore, Poincar\'e's lemma ensures the existence of a smooth function $l:M\rightarrow \mathds{R}$, such that $dl=\omega$.

We can take then the integral leaf $P=l^{-1}(0)$. If $\varphi_{p}(r)$ is the global flow of the vector field $K$ through a point $p$ (and so, with $r\in \mathds{R}$), then $\frac{d}{dr}\big(l(\varphi_{p} (r)\big)=1$. This guarantees that the integral curves of $K$ cross one (and only one) time $P$.

Therefore, we can extend the map $\phi$ to a global isometry, obtaining
\[\Phi : P\times_{h} \mathds{R}\longrightarrow M, \quad  \Phi(p,r)=\varphi_{p}(r).\]
As a direct consequence the Riemannian manifold $(M,g)$ is a warped product $P\times_{h} \mathds{R}$, with $h=\| K\|$.

Taking into account the previous remarks, from here on we will identify $(M,g)$ with a warped product $P\times_{h} \mathds{R}$ as defined on \eqref{eq:metric}. From construction, the vector field $\partial_r:=\frac{\partial}{\partial r}$ is the associated Killing vector field to $P\times_{h} \mathds{R}$, so
\begin{eqnarray}
\label{eq:2}
g(\nablat_X\partial_r,X)=0,\qquad \hbox{for any vector field $X\in\mathfrak{X}(M)$.}
\end{eqnarray}
where $\nablat$ stands for the Levi-Civita connection in $M$.

Let $x:\Sigma\longrightarrow M$ be an isometric immersion of an $n$-dimensional connected manifold $\Sigma$ in $M$. If the normal bundle of the immersion is trivial, i.e., if there exists a globally defined unitary normal vector field $N$ on $\Sigma$, the hypersurface is said to be two-sided. When the ambient $M$ is orientable, this property is equivalent to the orientability of $\Sigma$. In particular, if we consider a Riemannian warped product $M=P\times_h\R$ and a hypersurface $\Sigma$, which is transverse to the the Killing vector field $\partial_{r}$, then the hypersurface is locally a graph on a suitable domain included in $P$, and therefore we can take $N$ such that $g(N,\partial_r)$ is signed.

From now on, we will focus in the case where $\Sigma$ is an entire graph on $P$. So, we will consider an embebded hypersurface $\Sigma_u$ in $M$, given by $\Sigma_u=\{(p,u(p)): p\in P\}$, where $u\in C^\infty(P)$. It is immediate to see that a unitary normal vector to $\Sigma_u$ may be chosen as 

\begin{equation}
\label{eq:normal}
N=\frac{h}{\sqrt{1+h^2\left| \nabla u\right|_{\sigma}^2}}\left(\frac{1}{h^2}\partial_r-\nabla u\right).
\end{equation}

Let  $A: \mathcal{X}(\Sigma)\rightarrow \mathcal{X}(\Sigma)$ be the Weingarten operator of the hypersurface defined by $A(X)=-\overline{\nabla}_X N$. Recall that given $\{E_1,\dots,E_n\}$ a local orthonormal frame on $\Sigma_u$, the mean curvature function of the hypersurface is given by:
\begin{equation}
  \label{eq:8}
  H=\dfrac{1}{n} \sum_{i=1}^n g(A(E_i),E_i).
\end{equation}
In particular, the mean curvature of the graph $\Sigma_u$ satisfies
\[nH(u)=\mathrm{div} \left(  \frac{h \nabla u}{ \sqrt{1 + h^2 \left|\nabla u\right|_{\sigma}^2}} \right) +\frac{\sigma (\nabla h, \nabla u)}{\sqrt{1 + h^2\left|\nabla u\right|_{\sigma}^2}} \cdot\]

Finally let us notice that the Riemannian manifold $(M,g)$ is foliated by the family of embedded hypersurfaces $P\times\{r_0\}$, with $r_0\in \mathds{R}$ constant, which are totally geodesic and consequently have constant mean curvature $H\equiv 0$.

\section{Setting up}

\subsection{Parabolic Riemannian manifolds}

Let us recall that a function $u\in C^{\infty}(P)$ is called \emph{superharmonic} if $\Laplace u\leq 0$. Then, by definition, a complete (non-compact) Riemannian manifold $(P,\sigma)$ is parabolic if it admits no positive superharmonic functions but the constants (see \cite{Kazdan_1987}). In the case of dimension two, parabolicity is closely related with the behaviour of the Gaussian curvature. For example, a well-known result of Ahlfors and Blanc-Fiala-Huber states that any complete Riemannian surface with non negative Gauss curvature is parabolic (\cite{Huber_1958}). 
 In a 
weaker version, if the Gauss curvature of a complete Riemannian surface satisfies $K  \geq \frac{-1}{r^2 \, \ln r}$, being $r$ the distance to a sufficiently large fixed point, then the surface must be parabolic \cite{Greene_1979}. In the same direction, 
in \cite{Li_2002} it is shown that any complete Riemannian surface with finite total curvature  must be parabolic. Nevertheless, in the $n>2$ dimensional case parabolicity has no clear relationship with the sectional curvature. Indeed, the Euclidean 
space $\mathds{R}^n$ is parabolic if and only if $n \leq 2$. Instead, parabolicity has a remarkable relation with the volume growth of the geodesic balls on any complete (non-compact) Riemannian manifold $(M,g)$. In fact, if $(M,g)$ has moderate volume growth, then it must be parabolic (\cite{Karp_1982}).

Parabolicity is not, in general, a conformal invariant. However, it is invariant under the so-called \emph{quasi-isometries} (see  \cite{Grigor_yan_1999}). Concretely, two Riemannian manifolds $(P,\sigma)$ and $(P',\sigma')$ are said to be quasi-isometric if there exists a diffeomorphism $\varphi:P\rightarrow P'$ and a constant $a$ such that, for all $v\in TP$ it holds 
\begin{equation}
  \label{eq:9}
  a^{-1}\left| v \right|_{\sigma}\leq \left| d\varphi(v) \right|_{\sigma'}\leq a \left| v \right|_{\sigma}.
\end{equation}

Then, the following result follows,

\begin{proposition}(\cite[Corollary 5.3]{Grigor_yan_1999})
\label{thm:3}
If two manifolds $(P,\sigma)$ and $(P',\sigma')$ are quasi-isometric, then both are parabolic or not simultaneously.
\end{proposition}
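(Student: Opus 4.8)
The plan is to prove that parabolicity is a quasi-isometry invariant by replacing the definition (no non-constant positive superharmonic functions) with an equivalent \emph{capacity} criterion that manifestly depends only on the Dirichlet energy and the Riemannian volume --- two quantities that the comparison \eqref{eq:9} controls from both sides. Concretely, I would use the following well-known fact from potential theory: $(P,\sigma)$ is parabolic if and only if there is a sequence $\phi_j\in C_c^\infty(P)$ with $0\le\phi_j\le 1$, with $\phi_j\equiv 1$ on an exhaustion $K_j\nearrow P$ by compact sets, and with $\int_P|\nabla\phi_j|_\sigma^2\,dV_\sigma\to 0$ (equivalently, the capacity of every compact set with non-empty interior vanishes); see \cite{Grigor_yan_1999}. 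Establishing this equivalence is the one genuinely non-trivial input, and I would simply cite it: the whole difficulty of the statement is hidden here, since, as already noted in the paper, quasi-isometries do not preserve the Laplacian, so one cannot transport superharmonic functions directly.

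Granting the capacity criterion, the argument is short. Let $\varphi:P\to P'$ and the constant $a$ (which we may take $\ge 1$) realize the quasi-isometry, and let $\phi_j$ on $P$ be a test sequence as above. I would set $\psi_j:=\phi_j\circ\varphi^{-1}\in C_c^\infty(P')$ and estimate its Dirichlet energy. Two elementary consequences of \eqref{eq:9} do the work. First, dualizing the inequality: since $d(\phi\circ\varphi^{-1})_{\varphi(p)}=d\phi_p\circ(d\varphi_p)^{-1}$ and $|(d\varphi_p)^{-1}w|_\sigma\le a\,|w|_{\sigma'}$, one gets for any smooth $\phi$ and $p\in P$
\[
|\nabla(\phi\circ\varphi^{-1})|_{\sigma'}^2\bigl(\varphi(p)\bigr)\ \le\ a^2\,|\nabla\phi|_\sigma^2(p).
\]
Second, the same two-sided bound forces all singular values of $d\varphi_p$ to lie in $[a^{-1},a]$, hence the Jacobian relating the two Riemannian densities lies between $a^{-n}$ and $a^n$, so $\varphi^*(dV_{\sigma'})\le a^n\,dV_\sigma$ as measures. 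Combining these via the change-of-variables formula yields
\[
\int_{P'}|\nabla\psi_j|_{\sigma'}^2\,dV_{\sigma'}\ \le\ a^{\,n+2}\int_{P}|\nabla\phi_j|_\sigma^2\,dV_\sigma\ \longrightarrow\ 0 .
\]

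To conclude I would check that the $\psi_j$ still form an admissible test sequence on $P'$: they are smooth and compactly supported because $\varphi$ is a diffeomorphism; $0\le\psi_j\le 1$; and $\psi_j\equiv 1$ on $\varphi(K_j)$, which exhausts $P'$ because $\varphi$ is a homeomorphism. Hence $(P',\sigma')$ satisfies the capacity criterion and is parabolic. Since $\varphi^{-1}$ is again a quasi-isometry with the same constant $a$, the reverse implication follows by the identical argument, so $P$ and $P'$ are parabolic or not simultaneously. The main obstacle, to repeat, is not the transport argument but the reduction to the capacity formulation of parabolicity; everything after that is just the observation that the Dirichlet integral and the Riemannian volume are quasi-isometry invariants up to the dimensional constant $a^{\,n+2}$.
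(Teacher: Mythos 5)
The paper offers no proof of this proposition at all: it is quoted directly from \cite[Corollary~5.3]{Grigor_yan_1999}, and your argument is essentially the one given in that reference, namely reducing parabolicity to the vanishing of capacities and observing that under \eqref{eq:9} the Dirichlet integrand and the Riemannian density are each distorted by at most a fixed power of $a$, giving the factor $a^{n+2}$. Your transport computation (the dual bound $|(d\varphi_p)^{-1}w|_\sigma\le a|w|_{\sigma'}$ and the Jacobian bound from the singular values) is correct, and you appropriately isolate and cite the one genuinely non-trivial ingredient, the capacity characterization of parabolicity, so the proof is sound.
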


Now, we can state the following technical result for the case of entire  graphs in $P\times_h\mathds{R}$, where $(P,\sigma)$ is a parabolic manifold.

\begin{proposition}
\label{thm:1}
Let $(P,\sigma)$ be a parabolic $n$-dimensional Riemannian manifold, any smooth positive function and let us consider the $(n+1)$-Riemannian manifold $P\times_h\mathds{R}$. If it exists a smooth function $u\in\mathcal{C}^\infty (P)$ satisfying
\begin{equation}
  \label{eq:10}
  \left| \nabla u \right|_{\sigma}\leq \dfrac{B}{h},
\end{equation} for some positive constant $B\in \mathds{R}$, then the hypersurface $\Sigma_u$ obtained as the entire graph determined by $u$ is also parabolic with respect to the induced metric from $P\times_h \mathds{R}$. 
\end{proposition}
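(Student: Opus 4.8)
The plan is to realize the induced metric on $\Sigma_u$ explicitly and then show that the graph is quasi-isometric to the base $(P,\sigma)$; parabolicity of $\Sigma_u$ will then follow from Proposition~\ref{thm:3}.

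First I would parametrize the graph by the diffeomorphism $\varphi\colon P\to\Sigma_u$, $\varphi(p)=(p,u(p))$, and pull back the warped-product metric $g=\pi^\ast_P(\sigma)+h(\pi_P)^2\pi_I^\ast(dr^2)$ from \eqref{eq:metric} along $\varphi$. Since $\pi_P\circ\varphi=\mathrm{id}_P$ and $\pi_I\circ\varphi=u$, so that $\varphi^\ast(dr)=du$, one obtains the induced metric
\[
g_u=\varphi^\ast g=\sigma+h^2\,du\otimes du,
\]
that is, $|v|_{g_u}^2=|v|_\sigma^2+h(p)^2\,\sigma(\nabla u,v)^2$ for every $v\in T_pP$. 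The lower bound $|v|_{g_u}^2\ge|v|_\sigma^2$ is immediate. For the upper bound I would apply the Cauchy--Schwarz inequality together with the hypothesis \eqref{eq:10}:
\[
h^2\,\sigma(\nabla u,v)^2\le h^2\,|\nabla u|_\sigma^2\,|v|_\sigma^2\le h^2\Big(\frac{B}{h}\Big)^2|v|_\sigma^2=B^2\,|v|_\sigma^2 ,
\]
so that $|v|_\sigma^2\le|v|_{g_u}^2\le(1+B^2)\,|v|_\sigma^2$. Setting $a=\sqrt{1+B^2}\ (\ge 1)$, this is precisely the pair of inequalities $a^{-1}|v|_\sigma\le|v|_{g_u}\le a\,|v|_\sigma$ of \eqref{eq:9}, so $\varphi$ is a quasi-isometry between $(P,\sigma)$ and $(\Sigma_u,g_u)$.

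Finally, since $g_u\ge\sigma$ pointwise and $(P,\sigma)$ is complete, the induced metric $g_u$ is complete as well, so $(\Sigma_u,g_u)$ is a complete (non-compact) Riemannian manifold; being quasi-isometric to the parabolic manifold $(P,\sigma)$, Proposition~\ref{thm:3} gives that $\Sigma_u$ is parabolic, as claimed. I do not expect a genuine obstacle in this argument: the only step that requires a little care is the correct computation of the induced metric, in particular the fact that the warping function $h$ (a function on $P$) appears as the coefficient of $du\otimes du$; once this is in place, the quasi-isometry estimate is a one-line application of Cauchy--Schwarz, and hypothesis \eqref{eq:10} is exactly what is needed to bound the cross term uniformly with respect to $\sigma$ and thereby control the quasi-isometry constant.
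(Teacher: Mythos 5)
Your proposal is correct and follows essentially the same route as the paper: pull back the warped-product metric to identify the induced metric on $\Sigma_u$ with $\sigma+h^2\,du\otimes du$ on $P$, apply Cauchy--Schwarz together with \eqref{eq:10} to get the two-sided bound with constant $\sqrt{1+B^2}$, and invoke the quasi-isometry invariance of parabolicity from Proposition~\ref{thm:3}. Your additional remark on completeness of the induced metric is a small point the paper leaves implicit, but otherwise the arguments coincide.
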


\begin{proof}
  Let us consider a function $u\in C^{\infty}(P)$ satisfying the hypothesis. Our aim is to show that both, $\Sigma_u$ with the induced metric from $P\times_h \mathds{R}$, and $(P,\sigma)$ are quasi-isometric.

  It is a quite straightforward computation to show that $(\Sigma_{u},g|_{\Sigma_{u}})$ is isometric to $(P,\sigma')$ with $\sigma'=h^2du \otimes du + \sigma$. Then, for any arbitrary point $p\in P$ and any vector $v\in T_{p}P$, we have

  \begin{equation}
    \label{eq:11}
    \sigma'(v,v)=h^2\sigma(\nabla u,v)^2+\sigma(v,v)\geq \sigma(v,v).
  \end{equation}

  On the other hand, making use of the inequality $\left| \nabla u \right|_{\sigma}\leq B/h$, we  obtain

  \begin{equation}
    \label{eq:12}
    \sigma'(v,v)=h^2\sigma(\nabla u,v)^2+\sigma(v,v)\leq h^2 \left| \nabla u \right|_{\sigma}^2\sigma(v,v)+\sigma(v,v)\leq (B^2+1)\sigma(v,v)
  \end{equation}
Joining \eqref{eq:11} and \eqref{eq:12} together  we deduce that $(\Sigma_u,g|_{\Sigma_u})$ and $(P,\sigma)$ are quasi-isometric, and the result follows.

\end{proof}

\subsection{The mean curvature of the graph}
Let us now consider $\Sigma_{u}$ an entire graph as before and the function $\tau=\pi_\mathds{R}|_{\Sigma_{u}}:\Sigma_{u}\rightarrow \mathds{R}$. A simple calculation shows that

\begin{eqnarray}
\label{eq:1}
\nabla^{\Sigma} \tau = \dfrac{1}{h^2}\partial_r^{T},
\end{eqnarray}
where $\partial_r^T$ denotes the orthogonal projection of $\partial_r$ to $T\Sigma$, and $\nabla^{\Sigma}\tau$ denotes the gradient of $\tau$ with respect to the induced metric from $\Sigma_u$.

There is a nice relation between the mean curvature of $\Sigma$ and the laplacian of $\tau$, $\Delta^\Sigma\tau$. In fact, given $\left\{E_1,\dots,E_n  \right\}$ an orthonormal frame on $\Sigma$, it follows that:

\begin{equation}
\label{eq:4}
\begin{array}{rl}
\Laplace^{\Sigma} \tau =  & \mathrm{div^{\Sigma}}(\nabla^{\Sigma} \tau)\\
  = & \sum_{i=1}^{n}g(\nabla^{\Sigma}_{E_i}\nabla^{\Sigma}\tau,E_i)  \\
 = & \sum_{i=1}^ng(\nabla^{\Sigma}_{E_i}\left(\dfrac{1}{h^2}\partial_r^T  \right),E_i) \\
                           = &  g(\partial_r^T,\nabla^{\Sigma} \left(1/h^2 \right)) + \dfrac{1}{h^2} \sum_{i=1}^n g \left(\nabla^{\Sigma}_{E_{i}}\partial_r^T,E_i  \right), \\
\end{array}
\end{equation}
$\mathrm{div}^\Sigma$ and $\nabla^\Sigma$ being the divergence operator and the Levi-Civita connection in $\Sigma_u$.

Now observe that, by recalling \eqref{eq:1}, the first term can be expressed as:

\begin{equation}
  \label{eq:3}
  g(\partial_r^T,\nabla^{\Sigma} \left( 1/h^2 \right))=h^2 g(\nabla^{\Sigma} \tau,\nabla^{\Sigma}(1/h^2))=-2g(\nabla^{\Sigma} \tau, \nabla^{\Sigma}(\ln h)).
\end{equation}

For the second term, and considering $N$ a unitary normal vector field for $\Sigma$,

\begin{equation}
  \label{eq:6}
  \begin{array}{rl}
  \sum_{i=1}^ng(\nabla^{\Sigma}_{E_i}\partial_r^{T},E_i)=&\sum_{i=1}^n g \left( \overline{\nabla}_{E_i} \left( \partial_r-g(N,\partial_{r})N \right),E_i \right)\\ =& g(N,\partial_{r})\sum_{i=1}^n g(A(E_i),E_i)\\ = & n\,H\,g(N,\partial_{r})
  \end{array}
\end{equation}

In conclusion,
\begin{equation}
  \label{eq:7}
  \Laplace^{\Sigma} \tau +2 g(\nabla^{\Sigma} \tau,\nabla^{\Sigma} (\ln h)) = n\,H\,g(N,\partial_r).
\end{equation}

\section{Main Results}
\label{sec:main-results}

Making use of the previous equation \eqref{eq:7} and of an appropriate conformal change of metric, which is the key of our next result, we can state the following theorem.

\begin{theorem}
\label{thm:2} Let $(P,\sigma)$ be a parabolic Riemannian manifold, and consider $h:P\rightarrow \mathds{R}$ a positive smooth function satisfying
\begin{equation}
  \label{eq:14}
  0<\inf(h)\leq \sup(h)<\infty.
\end{equation}

Let $\Sigma_u$ be an entire graph on $P\times_h\R$ determined by a bounded from below (resp. bounded from above) function $u\in\mathcal{C}^{\infty}(P)$, such that $|\nabla u|_\sigma <C$ for a certain $C\in\R$ and satisfying $H\leq 0$ (resp. $H\geq 0$). Then $\Sigma_u$  must be a slice, i.e., $\Sigma_u=P\times \left\{ r_0 \right\}$ for some constant $r_{0}\in \mathds{R}$. 	
	
In particular, slices are the only bounded entire graphs with bounded gradient and signed mean curvature.
 \end{theorem}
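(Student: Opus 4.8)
The plan is to feed the identity~\eqref{eq:7} for the height function into the parabolicity transfer supplied by Propositions~\ref{thm:1} and~\ref{thm:3}. Write $\tau=\pi_{\mathds{R}}|_{\Sigma_u}$, and use the diffeomorphism $\pi_P|_{\Sigma_u}:\Sigma_u\to P$ to regard $h$ as a function on $\Sigma_u$, so that $\nabla^{\Sigma}\ln h$ makes sense. It suffices to treat the case in which $u$ is bounded from below and $H\leq 0$: the case $u$ bounded from above, $H\geq 0$ then follows by applying this to $-u$ (which still satisfies $|\nabla(-u)|_{\sigma}<C$ and has mean curvature $-H\leq 0$), and the final assertion about bounded graphs follows from either case. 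We may also assume $P$ is non-compact; otherwise a superharmonic function on $P$ is constant by the maximum principle and $\Sigma_u$ is a slice at once.

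First I would verify that the graph is parabolic for its induced metric. Since $\sup(h)<\infty$, the bound $|\nabla u|_{\sigma}<C$ gives, pointwise, $|\nabla u|_{\sigma}<C\leq C\sup(h)/h=B/h$ with $B:=C\sup(h)>0$, so Proposition~\ref{thm:1} applies and $(\Sigma_u,g|_{\Sigma_u})$ is parabolic, in particular complete. Next, for $n\geq 3$, I would put on $\Sigma_u$ the conformal metric $\tilde g:=h^{4/(n-2)}\,g|_{\Sigma_u}$. By~\eqref{eq:14} the conformal factor satisfies $0<(\inf(h))^{4/(n-2)}\leq h^{4/(n-2)}\leq(\sup(h))^{4/(n-2)}<\infty$, so the identity map is a quasi-isometry between $(\Sigma_u,g|_{\Sigma_u})$ and $(\Sigma_u,\tilde g)$, and by Proposition~\ref{thm:3} the latter is again parabolic. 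The reason for this particular factor is that the transformation rule for the Laplacian under a conformal change $\tilde g=e^{2\varphi}g$ in dimension $n$, namely $\Laplace^{\tilde g}f=e^{-2\varphi}\bigl(\Laplace^{\Sigma}f+(n-2)\,g(\nabla^{\Sigma}\varphi,\nabla^{\Sigma}f)\bigr)$, reproduces with $\varphi=\tfrac{2}{n-2}\ln h$ exactly the left-hand side of~\eqref{eq:7}: $\Laplace^{\tilde g}\tau=h^{-4/(n-2)}\bigl(\Laplace^{\Sigma}\tau+2\,g(\nabla^{\Sigma}\tau,\nabla^{\Sigma}\ln h)\bigr)$.

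Now I would plug in~\eqref{eq:7}. From~\eqref{eq:normal}, and since $\partial_r\perp TP$ and $g(\partial_r,\partial_r)=h^2$, one computes $g(N,\partial_r)=h/\sqrt{1+h^2|\nabla u|_{\sigma}^2}>0$. Hence $\Laplace^{\tilde g}\tau=n\,H\,g(N,\partial_r)\,h^{-4/(n-2)}\leq 0$, i.e. $\tau$ is superharmonic on the parabolic manifold $(\Sigma_u,\tilde g)$. Since $u$, and therefore $\tau$, is bounded from below by some $m\in\mathds{R}$, the function $\tau-m$ is a non-negative superharmonic function on a parabolic manifold, hence constant. Thus $\tau\equiv r_0$ for some $r_0\in\mathds{R}$, so $u\equiv r_0$ and $\Sigma_u=P\times\{r_0\}$.

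Finally, $n=2$ must be treated apart, since there the conformal change cannot absorb the first-order term of~\eqref{eq:7}. Instead I would rewrite~\eqref{eq:7} in divergence form as $\div^{\Sigma}\bigl(h^2\,\nabla^{\Sigma}\tau\bigr)=n\,h^2 H\,g(N,\partial_r)\leq 0$, so that $\tau$ is superharmonic for the weighted Laplacian of density $h^2$; since by~\eqref{eq:14} this density is bounded between positive constants, weighted and ordinary parabolicity of $(\Sigma_u,g|_{\Sigma_u})$ coincide, and the argument above goes through unchanged (this divergence form in fact works for all $n$). I expect the real obstacle to be precisely this middle step: after identifying the metric (or weight) for which $\tau$ is superharmonic, one must still guarantee that it is parabolic, which is not automatic because parabolicity is not a conformal invariant; this is exactly where the two-sided bound~\eqref{eq:14} on $h$ and the gradient bound $|\nabla u|_{\sigma}<C$ enter, via the quasi-isometry invariance of parabolicity. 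Dropping the parabolicity of $P$ itself also destroys the conclusion, as the abundance of entire minimal graphs in $\mathds{H}^2\times\mathds{R}$ recalled in Example~\ref{example} shows.
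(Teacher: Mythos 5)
Your argument is correct, and for $n\geq 3$ it coincides with the paper's proof: parabolicity of $(\Sigma_u,g|_{\Sigma_u})$ via Proposition~\ref{thm:1} with $B=C\sup(h)$, the conformal factor $h^{4/(n-2)}$ chosen so that \eqref{eq:13} turns the left-hand side of \eqref{eq:7} into $\tilde{\Laplace}^{\Sigma}\tau\leq 0$, quasi-isometry of $g$ and $\tilde g$ from \eqref{eq:14}, and the Liouville property for positive superharmonic functions. Where you genuinely depart from the paper is the case $n=2$: the paper stabilizes the problem by passing to $\overline P=P\times\mathds{S}^1$ (parabolic, being a product of a parabolic manifold with a compact one) and to the graph of $\overline u(x,y)=u(x)$ in $\overline P\times_h\mathds{R}$, thereby raising the fiber dimension to $3$ so that the same conformal change applies; you instead rewrite \eqref{eq:7} in divergence form, $\div^{\Sigma}\bigl(h^{2}\nabla^{\Sigma}\tau\bigr)=n\,h^{2}H\,g(N,\partial_r)\leq 0$, and conclude via parabolicity of the weighted manifold $(\Sigma_u,g|_{\Sigma_u},h^{2}dV)$. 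Your identity is correct, and your route has the advantage of being uniform in $n$ (making the conformal change dispensable altogether); the only caveat is that it rests on the stability of parabolicity under a bounded change of the \emph{measure}, which is a weighted-manifold statement slightly stronger than Proposition~\ref{thm:3} as literally quoted --- though it is standard and is covered by the same reference \cite{Grigor_yan_1999} (quasi-isometries of weighted manifolds, or the capacity characterization, since $\int h^{2}|\nabla\phi|^{2}$ is comparable to $\int|\nabla\phi|^{2}$ under \eqref{eq:14}). Your reduction of the ``bounded above, $H\geq 0$'' case via $u\mapsto -u$ (equivalently the isometry $(p,r)\mapsto(p,-r)$, which reverses the sign of $H$ for the normal with $g(N,\partial_r)>0$) is exactly what the paper leaves implicit in ``the other case is analogous''.
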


\begin{proof} 
Let us assume without loss of generality that $\Sigma_u$ is bounded from below  with $H\leq 0$, since the other case is analogous.

Let us begin by assuming that $n\geq 3$ and consider $u:P\rightarrow \mathds{R}$ a function satisfying  the assumptions of the theorem. Let the normal vector field to $\Sigma_u$ being defined as in~\eqref{eq:normal}. Then, from \eqref{eq:7} it follows that:
\[\Laplace^{\Sigma} \tau + 2g(\nabla^{\Sigma} \tau,\nabla^{\Sigma}(\ln h))\leq 0.\]   
Now, let us recall that, under a conformal change $\tilde{g}=\varphi g$ the Laplace operator  transforms as

\begin{equation}
  \label{eq:13}
  \tilde{\Laplace}^{\Sigma}f= \dfrac{1}{\varphi}\left( \Laplace^{\Sigma} f +\dfrac{n-2}{2}g(\nabla^{\Sigma} f,\nabla^{\Sigma} (\ln \varphi)) \right),
\end{equation}
(see for instance \cite{Besse_1987}). Then, by considering the conformal factor $\varphi=h^{\frac{4}{n-2}}$, it follows that
\[\tilde{\Laplace}^{\Sigma}\tau=\dfrac{1}{h^{\frac{4}{n-2}}}\left( \Laplace^{\Sigma} \tau +2 g(\nabla^{\Sigma} \tau,\nabla^{\Sigma}(\ln(h)))\right)\leq 0.
\]
Hence, the function $\tau:\Sigma\rightarrow \mathds{R}$ is $\tilde{g}$-superharmonic.

Finally, as $P$ is parabolic, $\left| \nabla u \right|_{\sigma}$ is bounded and $\sup(h)<\infty$, Proposition \ref{thm:1} ensures that $\Sigma$ is also parabolic with respect to $g$. Moreover, from \eqref{eq:14}  it easily follows that $g$ and $\tilde{g}$ are quasi-isometric, and then, that $\Sigma_u$ is also parabolic with respect to $\tilde{g}$. As there exists a certain real constant $d$, such that $\tau+d$ is a positive  superharmonic function on a parabolic manifold,  then the function $u\equiv\tau$ must be necessarily constant. In conclusion, $\Sigma_u=P\times\left\{r_0 \right\}$ for some constant $r_0\in\mathds{R}$.

It only remains the case of dimension $n=2$, since the previous conformal change is no longer well defined. However, we can consider $\overline{P}=P\times \mathds{S}^1$ endowed with the metric $\overline{\sigma}=\sigma + d\theta^{2}$ (where $d\theta^2$ denotes the usual metric on $\mathds{S}^1$). The following observations are in order:
\begin{enumerate}[label=(\roman*)]
\item Firstly, due the fact that $\mathds{S}^1$ is compact, the parabolicity of $P$ implies that the product manifold $P\times \mathds{S}^1$ is also parabolic, (see \cite{Kazdan_1987}).
  
\item Secondly, if $\Sigma$ is a  hypersurface with non positive mean curvature in $P\times_{h} \mathds{R}$, then $\overline{\Sigma}=\Sigma\times \mathds{S}^1$ is also a hypersurface in $\overline{P}\times_h\mathds{R}$ with the same property.
  
\item Finally, if $\Sigma$ is the graph determined by a function $u:P\rightarrow\mathds{R}$, then $\overline{\Sigma}= \Sigma\times \mathds{S}^1$ is the graph in $\overline{P}\times_h\mathds{R}$ defined by the function $\overline{u}\in C^{\infty}(\overline{P})$ given by $\overline{u}(x,y)=u(x)$ for $(x,y)\in P\times \mathds{S}^1$.
\end{enumerate}

Hence, given  an entire graph $\Sigma_u$ in $P^2\times_h\mathds{R}$ with bounded $\left| \nabla u \right|_{\sigma}$ and non positive mean curvature, we can consider $\overline{\Sigma}_{\overline{u}}$ as above,  which also has non positive mean curvature in $\overline{P}\times \mathds{R}$. Due the fact that now $\overline{P}$ is $3-$dimensional, we are in conditions to apply the previous arguments, ensuring that $\overline{u}$, and so $u$, is constant. 
\end{proof}

As an immediate consequence of the previous result, we have the following corollary regarding minimal graphs:

\begin{corollary}\label{cor:minimal}
  If $\Sigma$ is a minimal entire graph defined by a half-bounded function $u\in\mathcal{C}^\infty(P)$ (i.e., bounded from below or from above)  with bounded gradient, then $\Sigma=P\times \left\{ r_0 \right\}$ for some constant $r_0\in \mathds{R}$.
\end{corollary}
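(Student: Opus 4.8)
The plan is to read this statement off directly from Theorem~\ref{thm:2}, the only genuinely new input being the elementary remark that a \emph{minimal} graph has identically vanishing mean curvature, so the sign hypothesis on $H$ in Theorem~\ref{thm:2} is automatically met, and in fact in both directions simultaneously.

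More precisely, I would first recall from the Preliminaries that the mean curvature of the graph $\Sigma_u$ is
\[
nH(u)=\mathrm{div}\!\left(\frac{h\nabla u}{\sqrt{1+h^2|\nabla u|_\sigma^2}}\right)+\frac{\sigma(\nabla h,\nabla u)}{\sqrt{1+h^2|\nabla u|_\sigma^2}},
\]
so that $\Sigma_u$ being a minimal graph is exactly the statement that $u$ solves~\eqref{eq:PDE}, i.e.\ $H\equiv 0$ on $\Sigma_u$. In particular $H\le 0$ everywhere on $\Sigma_u$, and also $H\ge 0$ everywhere. I would then split according to the half-boundedness of $u$: if $u$ is bounded from below, apply Theorem~\ref{thm:2} with the hypothesis $H\le 0$; if $u$ is bounded from above, apply it with $H\ge 0$. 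In either case $|\nabla u|_\sigma$ is bounded by assumption, so the hypotheses of Theorem~\ref{thm:2} are in force and its conclusion is precisely $\Sigma_u=P\times\{r_0\}$ for some constant $r_0\in\mathds{R}$.

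There is essentially no obstacle here beyond bookkeeping: all of the analytic and geometric work — the superharmonicity of the height function $\tau$ after the conformal change $\varphi=h^{4/(n-2)}$, the auxiliary passage to $P\times\mathds{S}^1$ handling the excluded dimension $n=2$, and the transfer of parabolicity from $(P,\sigma)$ to $(\Sigma_u,g|_{\Sigma_u})$ and then to the conformal metric via the quasi-isometries provided by Proposition~\ref{thm:1} and by the boundedness of $h$ — has already been carried out inside the proof of Theorem~\ref{thm:2}. The single point I would flag explicitly is that the corollary tacitly retains the standing assumption~\eqref{eq:14} on $h$ from the theorem it invokes; some two-sided control on $h$ is what makes Proposition~\ref{thm:1} applicable and keeps parabolicity stable under the conformal factor, and the Introduction's remarks on entire minimal graphs in $\mathds{H}^2\times\mathds{R}$ already show that no such rigidity can be expected once the structural hypotheses on the fibre or the warping are dropped.
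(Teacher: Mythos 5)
Your proposal is correct and is exactly the argument the paper intends: the corollary is stated as an immediate consequence of Theorem~\ref{thm:2}, obtained by noting that $H\equiv 0$ satisfies both sign conditions, so either half-boundedness case of the theorem applies. Your explicit remark that the standing hypotheses on $(P,\sigma)$ and the bound \eqref{eq:14} on $h$ are tacitly carried over is a worthwhile clarification, since the corollary's statement omits them.
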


Some final remarks regarding the parabolicity condition are in order. On the one hand, let us observe that the parabolicity of the base $P$ is necessary as there exist counterexamples even in the case where $h\equiv 1$. 

\begin{example}\label{example}
In the case where $P=\mathds{H}^2$ and $h$ is a constant function, Nelli and Rosenberg proved in~\cite{Nelli_Rosenberg_2002} the existence of non-trivial bounded minimal graphs in $\mathds{H}^2\times\mathds{R}$ with bounded gradient.

Specifically, considering the disk model $D=\{0\leq x_1^2+x_2^2<1\}$ for $\mathds{H}^2$, the boundary of the product space $\mathds{H}^2\times\mathds{R}$ is the cylinder $\mathds{S}^1\times\mathds{R}$. Then, in~\cite[Theorem 4]{Nelli_Rosenberg_2002} it is shown that given any continuous rectificable Jordan curve $\Gamma$ in $\mathds{S}^1\times\mathds{R}$, there exists a unique minimal graph on $\mathds{H}^2$ having $\Gamma$ as asymptotic boundary. This graph is obtained as a limit of appropriate graphs defined on disks $D_n$ centered at the origin of Euclidean radius $1-\frac{1}{n}$, and it is a standard fact that the hyperbolic gradient of the limit function tends to zero at $\partial D$, so in particular it is bounded.
\end{example}

On the other hand, the parabolicity of $P$ induces some restrictions on the geometry of the Riemannian manifold $P\times_h \mathds{R}$. In fact, let us consider in $P\times_h \mathds{R}$  a hypersurface $\Sigma_u$ given as the graph of a smooth function $u\in P$. Consider on $\Sigma_u$ the distinguished function $\Theta:=g(\partial_r, N)$, where $N$ is defined by~\eqref{eq:normal}. Assuming that $\Sigma_u$ is minimal, and taking into account \cite[Proposition 6]{Al_as_2006}, we arrive to

\begin{equation}
\Laplace \Theta=-({\rm trace}(A^2)+\overline{{\rm Ric}}(N,N)),\label{eq:5}
\end{equation}
where $\overline{{\rm Ric}}$ denotes the Ricci tensor on $M$. Consequently, if the entire graph $\Sigma_u$ is parabolic and the Ricci curvature of the spacetime is non negative in the direction of the normal vector field $N$, we can conclude that $\Theta$ is constant.

Observe however that we can take now $\Sigma_u=P$, which is by hypothesis a parabolic manifold. Now, if we consider a non-constant warping function $h$, we find a contradiction. In fact, every slice $P\times \{r_{0}\}$ for $r_{0}\in \mathds{R}$ is a totally geodesic hypersurface with normal  $N=\dfrac{1}{h}\partial_{r}$. Hence $\Theta=h$ is not constant, which is absurd.

In conclusion, we can state the following result

\begin{proposition}
  Let $(P,\sigma)$ be a parabolic manifold and $h:P\rightarrow \mathds{R}$ a non-constant positive function. Then, there exists a point $p\in M=P\times_h\mathds{R}$ where $\overline{{\rm Ric}}(\partial_r,\partial_r)<0$.
\end{proposition}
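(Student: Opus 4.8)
The plan is to specialize the identity~\eqref{eq:5} to the totally geodesic slices, exactly along the lines of the discussion preceding the statement, and then invoke the parabolicity of $P$. First I would fix $r_0\in\mathds{R}$ and regard the slice $P\times\{r_0\}$ as the entire graph $\Sigma_u$ of the constant function $u\equiv r_0$. Since $\nabla u=0$, formula~\eqref{eq:normal} gives the unit normal $N=\tfrac1h\,\partial_r$ (which is indeed unitary because $g(\partial_r,\partial_r)=h^2$), so that $\Theta=g(\partial_r,N)=\tfrac1h\,g(\partial_r,\partial_r)=h$. Moreover the slice is totally geodesic, hence minimal with $A\equiv 0$, and its induced metric is precisely $\sigma$, so that $\Laplace^{\Sigma}$ coincides with the Laplacian $\Laplace$ of $(P,\sigma)$.

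Next I would substitute this data into~\eqref{eq:5}. With $A\equiv0$ and $\Theta=h$, and using $N=\tfrac1h\partial_r$ in the last step, it becomes
\[
\Laplace h=-\,\overline{{\rm Ric}}(N,N)=-\frac{1}{h^{2}}\,\overline{{\rm Ric}}(\partial_r,\partial_r)
\]
as an identity on $(P,\sigma)$. Thus at every point the sign of $\Laplace h$ is opposite to that of $\overline{{\rm Ric}}(\partial_r,\partial_r)$; equivalently, $\overline{{\rm Ric}}(\partial_r,\partial_r)=-h^{2}\,\Laplace h$.

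Finally I argue by contradiction. Suppose $\overline{{\rm Ric}}(\partial_r,\partial_r)\geq 0$ at every point of $M$. Then $\Laplace h\leq 0$ on $P$, i.e.\ $h$ is a positive superharmonic function on the parabolic manifold $(P,\sigma)$, and by the very definition of parabolicity this forces $h$ to be constant, contradicting the hypothesis. Hence there must exist a point where $\overline{{\rm Ric}}(\partial_r,\partial_r)<0$, which is the assertion.

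I do not expect a genuine obstacle: the substantive input, the identity~\eqref{eq:5}, is already available, and the rest is the definition of a parabolic manifold. The only points that need care are the bookkeeping — verifying that $N=\tfrac1h\partial_r$ is unit, that $\Theta=h$ with the correct sign, and that the intrinsic Laplacian on a slice is the one of $(P,\sigma)$ — together with the observation that~\eqref{eq:5} is legitimately applied here because slices are minimal. If one prefers, the conclusion can be stated quantitatively as $\overline{{\rm Ric}}(\partial_r,\partial_r)=-h^{2}\,\Laplace h$, so that $\overline{{\rm Ric}}(\partial_r,\partial_r)$ is negative precisely wherever $h$ fails to be superharmonic, and such points necessarily exist unless $h$ is constant.
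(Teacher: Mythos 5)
Your argument is correct and coincides with the paper's: both specialize \eqref{eq:5} to a totally geodesic slice, where $\Theta=h$ and $A\equiv 0$, and conclude that $\overline{{\rm Ric}}(\partial_r,\partial_r)\geq 0$ everywhere would force $h$ to be a positive superharmonic function on the parabolic manifold $P$, hence constant, a contradiction. (Only your optional quantitative identity $\overline{{\rm Ric}}(\partial_r,\partial_r)=-h^{2}\Laplace h$ needs care: the Jacobi-type equation behind \eqref{eq:5} is really $\Laplace\Theta=-(\mathrm{trace}(A^{2})+\overline{{\rm Ric}}(N,N))\,\Theta$, with the factor $\Theta$ suppressed in the paper's display, which on the slice yields $\overline{{\rm Ric}}(\partial_r,\partial_r)=-h\Laplace h$; the sign argument is unaffected either way.)
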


\section{ New Moser-Bernstein theorems}
Let us finally show how previous geometric results are translated in terms of suitable non-linear elliptic partial differential equations. Firstly, from Corollary~\ref{cor:minimal} a Moser-Bernstein result for the solutions to~\eqref{eq:PDE} is derived.

\begin{theorem} \label{thm:MB} Let $(P,\sigma)$ be a Riemannian parabolic $n$-manifold and let $h\in C^\infty(P)$ be a positive smooth function, such that $0<\inf(h)\leq \sup(h)<\infty$. Then the only half-bounded entire solutions $u$ to the equation 

\[\mathrm{div} \left(  \frac{h \nabla u}{ \sqrt{1 + h^2 |\nabla u|_{\sigma}^2}} \right) +\frac{\sigma (\nabla h, \nabla u)}{\sqrt{1 + h^2|\nabla u|_{\sigma}^2}}=0,\]
with $\left|\nabla u\right|_{\sigma}<C$ for some constant $C\in\mathds{R}$, are the constant functions.
\end{theorem}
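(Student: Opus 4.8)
The plan is to recognize Theorem~\ref{thm:MB} as an immediate reformulation of the geometric result already established, so the proof is essentially a translation exercise rather than a new argument. First I would recall, as noted in Section~\ref{sec:preliminaries}, that a smooth function $u\in\mathcal{C}^\infty(P)$ solves the divergence-form equation
\[
\mathrm{div} \left(  \frac{h \nabla u}{ \sqrt{1 + h^2 |\nabla u|_{\sigma}^2}} \right) +\frac{\sigma (\nabla h, \nabla u)}{\sqrt{1 + h^2|\nabla u|_{\sigma}^2}}=0
\]
precisely when the entire graph $\Sigma_u=\{(p,u(p)):p\in P\}$ in $M=P\times_h\mathds{R}$ has vanishing mean curvature, i.e.\ is a minimal hypersurface; this is the identity displayed just before equation~\eqref{eq:8} with $H\equiv 0$, together with the observation from \cite[Subsection 2.2]{Dajczer_Hinojosa_deLira_2008} that $\Sigma_u$ is exactly the graph described by \eqref{eq:PDE}.

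Next I would simply invoke Corollary~\ref{cor:minimal}. Under the stated hypotheses, $(P,\sigma)$ is parabolic, $h$ satisfies $0<\inf(h)\le\sup(h)<\infty$, $u$ is half-bounded (bounded from below or from above), and $|\nabla u|_\sigma<C$; hence $\Sigma_u$ is a minimal entire graph defined by a half-bounded function with bounded gradient, and Corollary~\ref{cor:minimal} gives $\Sigma_u=P\times\{r_0\}$ for some $r_0\in\mathds{R}$. Translated back, this says that $u\equiv r_0$ is constant, which is the conclusion of the theorem. I should also remark that the boundedness assumption $\sup(h)<\infty$ is what allows the gradient bound $|\nabla u|_\sigma<C$ to yield the bound $|\nabla u|_\sigma\le B/h$ required to apply Proposition~\ref{thm:1} inside the proof of Theorem~\ref{thm:2}, so the hypotheses match exactly.

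Since everything reduces to results proved earlier in the paper, there is no genuine obstacle here; the only point requiring minimal care is the bookkeeping of which direction of half-boundedness pairs with which sign of $H$ — but for the minimal case $H\equiv 0$ this issue disappears, as Corollary~\ref{cor:minimal} already handles both sub-cases uniformly. Thus the proof is at most a couple of sentences citing the dictionary between \eqref{eq:PDE} and minimal graphs in $P\times_h\mathds{R}$ and then Corollary~\ref{cor:minimal}.
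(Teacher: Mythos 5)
Your proposal is correct and follows exactly the paper's route: the paper likewise obtains Theorem~\ref{thm:MB} directly from Corollary~\ref{cor:minimal} via the identification of solutions of \eqref{eq:PDE} with minimal entire graphs in $P\times_h\mathds{R}$. Your remark that $\sup(h)<\infty$ converts $|\nabla u|_\sigma<C$ into the bound $|\nabla u|_\sigma\leq B/h$ needed for Proposition~\ref{thm:1} is also accurate.
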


Moreover, taking into account the proof of Theorem~\ref{thm:2} we can also state a non-existence result.

\begin{theorem} Let $(P,\sigma)$ be a Riemannian parabolic $n$-manifold, $h\in C^\infty(P)$ a positive smooth function such that $0<\inf(h)\leq \sup(h)<\infty$, and $H\geq 0$ (resp. $H\leq 0$) be a non-identically zero smooth function defined on $P$. Then do not exist  entire bounded solutions from above (resp. from below) to the equation 
\[\mathrm{div} \left(  \frac{h \nabla u}{ \sqrt{1 + h^2 |\nabla u|_{\sigma}^2}} \right) +\frac{\sigma (\nabla h, \nabla u)}{\sqrt{1 + h^2|\nabla u|_{\sigma}^2}}=nH,\]
with $\left|\nabla u\right|_{\sigma}<C$ for some constant $C\in\mathds{R}$.
\end{theorem}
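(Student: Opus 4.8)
The plan is to deduce this non-existence statement from Theorem~\ref{thm:2} by a short contradiction argument. Suppose, say in the case $H\geq 0$, that there exists an entire solution $u\in\mathcal{C}^\infty(P)$, bounded from above and with $|\nabla u|_\sigma<C$, of the equation whose right-hand side equals $nH$. By the formula for the mean curvature of a graph recalled in Section~\ref{sec:preliminaries}, such a $u$ is precisely a function whose entire graph $\Sigma_u\subset P\times_h\R$ has mean curvature function $H$ (transported to $P$ via $p\mapsto(p,u(p))$), computed with respect to the unit normal $N$ of~\eqref{eq:normal}. A one-line computation with the warped metric~\eqref{eq:metric} gives $g(N,\partial_r)=h/\sqrt{1+h^2|\nabla u|_\sigma^2}>0$, so $H\geq 0$ is equivalent to $nH\,g(N,\partial_r)\geq 0$, which is exactly the sign hypothesis appearing in the ``bounded from above, $H\geq 0$'' branch of the proof of Theorem~\ref{thm:2}.

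First I would check that $\Sigma_u$ meets all the hypotheses of that branch of Theorem~\ref{thm:2}: $(P,\sigma)$ is parabolic, $h$ obeys $0<\inf(h)\leq\sup(h)<\infty$, the graph is bounded from above, and $|\nabla u|_\sigma<C$. Theorem~\ref{thm:2} then forces $\Sigma_u$ to be a slice $P\times\{r_0\}$; equivalently $u\equiv r_0$ is constant.

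Finally I would invoke the fact recorded at the end of Section~\ref{sec:preliminaries} that every slice $P\times\{r_0\}$ is totally geodesic and hence has mean curvature $H\equiv 0$ (equivalently, substituting a constant $u$ into the left-hand side of the equation makes it vanish). This contradicts the hypothesis that $H$ is non-identically zero, so no such $u$ can exist. The case $H\leq 0$ with $u$ bounded from below is completely analogous. I do not expect a genuine obstacle: the whole analytic content --- parabolicity of $\Sigma_u$ through Proposition~\ref{thm:1}, the conformal change of metric, and the quasi-isometry/parabolicity argument --- is already packaged in Theorem~\ref{thm:2}, and the only points needing care are bookkeeping ones, namely matching the sign of $H$ to the (always positive) sign of $g(N,\partial_r)$ and observing that the rigidity conclusion ``$u$ constant'' is incompatible with $H\not\equiv 0$.
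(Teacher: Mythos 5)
Your proposal is correct and is essentially the argument the paper intends: the paper gives no separate proof, remarking only that the result follows from Theorem~\ref{thm:2}, and your reduction --- a solution of the prescribed-mean-curvature equation is an entire graph with mean curvature $H$ falling under the matching sign/boundedness branch of Theorem~\ref{thm:2}, hence a slice, hence $H\equiv 0$, a contradiction --- is exactly that deduction, with the sign bookkeeping ($g(N,\partial_r)>0$) checked correctly.
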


\section{Acknowledgements}
The first author is partially supported by MINECO/FEDER project references MTM2015-65430-P and PGC2018-097046-B-100, Spain, and Fundaci\'on S\'eneca project reference 19901/GERM/15, Spain. Her work is a result of the activity developed within the framework of the Program in Support of Excellence Groups of the Regi\'{o}n de Murcia, Spain, by Fundaci\'{o}n S\'{e}neca, Science and Technology Agency of the Regi\'{o}n de Murcia. The second author is partially supported by the Spanish Grant MTM2016-78807-C2-2-P (MINECO and FEDER funds). The third author is partially supported by the Spanish MINECO and ERDF project Grant MTM2016-78807-C2-1-P.

\bibliographystyle{siam}

\end{document}